\newcommand{\co}{\!:}
\setlist[enumerate]{nolistsep, topsep=-3pt}
\setlist[itemize]{nolistsep, topsep=-3pt}
\renewcommand{\star}[1]{\ensuremath{{\fourIdx{*}{}{}{}{#1}}}}
\title{A density version of a theorem of Banach}
\author{David A. Ross}
\address{Department of Mathematics, University of Hawaii, Honolulu, HI 96822}
\email{ross@math.hawaii.edu}
\urladdr{https://math.hawaii.edu/~ross/}
\newcommand{\SA}{\ensuremath{\EuScript A}}
\newcommand{\SAL}{\ensuremath{\SA_L}}
\newcommand{\SAS}{\ensuremath{\SA_S}}
\newcommand{\st}[1]{\ensuremath{^\circ\negmedspace{#1}}}
\newcommand{\N}{\ensuremath{\mathbb N}}
\newcommand{\compl}[1]{\ensuremath{#1^\complement}}
\newtheorem{Thm}{Theorem}[section]
\newtheorem{Lemma}{Lemma}
\newtheorem{Cor}{Corollary}
\begin{document}

\begin{abstract}
The S--measure construction from nonstandard analysis is used to prove an extension of a result
on the intersection of sets in a finitely-additive measure space.  This is then used to give a density-limit version
of a representation theorem of Banach.
\end{abstract}

%\subject{primary}{msc2020}{28E05}
%\subject{secondary}{msc2020}{26E35, 28A20}
%\keywords{Loeb measure, S--measure, Banach, Density limit}

\subjclass[2020]{Primary 28E05; Secondary 26E35, 28A20}
\keywords{Loeb measure, S-measure, Banach, Density limit}

\maketitle

\section{Introduction}

The starting point for this note is the following result of Banach (described in Diestel and  Swart~\cite{diestelswart} as ``marvelous"):

\begin{Thm}\label{thm1}Let $X$ be a set, $B(X)$ be all bounded real functions on $X$, and $\{\,f_n : n\in\mathbb{N}\}$ be a uniformly
bounded sequence.  The following are equivalent:  (i)~$\{\,f_n\}_n$ converges weakly to $0$; (ii)~for any sequence
$\{x_k : k\in\mathbb{N}\}$ in $X$, $\lim\limits_{n\to\infty}\liminf\limits_{k\to\infty}f_n(x_k)=0$.
\end{Thm}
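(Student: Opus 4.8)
The plan is to pass to the dual. Identify $B(X)^{*}$ with the space of bounded finitely additive signed measures on the power set of $X$, so that the pairing is $\langle\mu,f\rangle=\int_X f\,d\mu$ and condition (i) becomes the assertion that $\int f_n\,d\mu\to0$ for every such $\mu$. The two implications then separate cleanly: (i)$\Rightarrow$(ii) is a soft argument with ultrafilter-limit functionals, whereas (ii)$\Rightarrow$(i) is where a finitely-additive intersection principle is required.

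For (i)$\Rightarrow$(ii) I would argue by contraposition on the one-sided bound. Fix a sequence $(x_k)$ and a free ultrafilter $\mathcal U$ on $\N$; then $L_{\mathcal U}(f):=\lim_{\mathcal U}f(x_k)$ is a norm-one element of $B(X)^{*}$, so (i) forces $\lim_{\mathcal U}f_n(x_k)\to0$ as $n\to\infty$. Now if $\liminf_k f_n(x_k)$ failed to tend to $0$ from above, then along some subsequence it would exceed a fixed $\varepsilon>0$; but $\liminf_k f_n(x_k)\ge\varepsilon$ means that $\{\,k : f_n(x_k)>\varepsilon/2\,\}$ is \emph{cofinite}, hence lies in every free ultrafilter, forcing $\lim_{\mathcal U}f_n(x_k)\ge\varepsilon/2$ for each such $n$ and contradicting the previous line. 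This gives $\limsup_n\liminf_k f_n(x_k)\le0$, and the reverse inequality is obtained by the symmetric argument, so (ii) follows.

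The substance is in (ii)$\Rightarrow$(i), which I would also prove by contraposition. If $(f_n)$ does not converge weakly to $0$, choose $\mu\in B(X)^{*}$, which after passing to a Jordan component and normalizing we may take to be a probability measure, together with a subsequence along which $\int f_{n_j}\,d\mu>\varepsilon$. Since the $f_n$ are uniformly bounded by some $M$, a Markov-type estimate produces level sets $E_j=\{\,x : f_{n_j}(x)>\varepsilon/2\,\}$ with $\mu(E_j)\ge\varepsilon/(2M)=:\delta$. The goal is then to manufacture a single sequence $(x_k)$ that is \emph{eventually} inside $E_j$ for infinitely many $j$: for such a sequence $\liminf_k f_{n_j}(x_k)\ge\varepsilon/2$ along an infinite set of indices $j$, so $\limsup_n\liminf_k f_n(x_k)\ge\varepsilon/2$, violating (ii). This reduces exactly to extracting, from a family of sets of measure $\ge\delta$ in a finitely-additive measure space, an infinite subfamily all of whose finite intersections are nonempty; one then threads $(x_k)$ through the nested intersections $E_{j_1}\cap\cdots\cap E_{j_m}$. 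I expect this extraction to be the main obstacle: in a merely finitely-additive space the reverse-Fatou argument that settles the countably-additive case is unavailable, and it is precisely here that the S-measure construction and the intersection theorem established earlier in the paper must do the work.
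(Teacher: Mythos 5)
Your overall architecture is the right one and matches how the paper actually handles this theorem (Theorem~\ref{thm1} is proved in \cite{rossbanach}, and this paper reruns the same two-step argument to get its density analogue, Theorem~\ref{thm4}): one direction is witnessed by a limit-along-$(x_k)$ positive functional --- your ultrafilter limit $L_{\mathcal U}$ is exactly the standard rendering of the paper's functional $g\mapsto\st{\,^*g}(x_k)$ at an infinite index $k$ --- and the other direction runs the Markov estimate on level sets of $f_{n_j}$ inside the finitely additive space $(X,\mathcal{P}(X),\mu)$ with $\mu(E)=T(\chi_E)$ and then invokes a finite-intersection extraction. You also correctly locate the crux: in a merely finitely additive space, extracting an infinite subfamily with nonempty finite intersections is not free, and it is exactly what Corollary~\ref{bergelson} (with the density conclusion discarded), or the weaker \cite[Theorem~1]{rossbanach}, supplies; deferring to that is legitimate here. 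Two smaller remarks on that half: weak convergence in this paper is tested against \emph{positive} functionals, so the failure of (i) hands you a positive $T$ directly and no Jordan decomposition is needed; and all you use of the finite intersections is that they are nonempty, so picking $x_N\in E_{j_1}\cap\cdots\cap E_{j_N}$ gives $\liminf_k f_{n_{j_m}}(x_k)\ge\varepsilon/2$ for every $m$, exactly as in the paper's proof of Theorem~\ref{thm4}.

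The genuine gap is the sentence ``the reverse inequality is obtained by the symmetric argument'' in (i)$\Rightarrow$(ii). The $\liminf$ is not odd: $\liminf_k f_n(x_k)\ge\varepsilon$ does make $\{k : f_n(x_k)>\varepsilon/2\}$ cofinite, but $\liminf_k f_n(x_k)\le-\varepsilon$ only makes $\{k : f_n(x_k)<-\varepsilon/2\}$ \emph{infinite}, so it need not lie in your chosen ultrafilter, and no single free ultrafilter need contain these sets for infinitely many $n$ simultaneously. The failure is real, not cosmetic: take $X=\N$, $f_n=-\chi_{\{n\}}$, and $(x_k)$ an enumeration of $\N$ in which every integer occurs infinitely often; then $T(f_n)\to0$ for every positive linear functional $T$ (since $\sum_nT(\chi_{\{n\}})\le T(1)<\infty$), yet $\liminf_k f_n(x_k)=-1$ for every $n$. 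So the one-sided reading of condition (ii) cannot be deduced from (i); the condition must be read with the absolute value inside, $\lim_n\liminf_k|f_n(x_k)|=0$, which is how the paper's Corollary~\ref{thm3} phrases the negation and how the proof of Theorem~\ref{thm4} actually argues. With the absolute value in place your first computation already finishes the job --- cofiniteness of $\{k : |f_n(x_k)|>\varepsilon/2\}$ forces $|\lim_{\mathcal U}f_n(x_k)|\ge\varepsilon/2$ --- and the problematic ``symmetric case'' disappears.
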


Weak convergence to zero here means that for any positive linear functional $T$ on $B(X)$, $Tf_n\to 0$ as $n\to\infty$.

The contrapositive is interesting:

\begin{Cor}\label{thm3}Let $X$ be a set, $B(X)$ be all bounded real functions on $X$, and $\{\,f_n : n\in\mathbb{N}\}$ be a uniformly
bounded sequence.  The following are equivalent:  (i)~there is a positive linear functional $T$ on $B(X)$, an infinite set $I\subseteq\N$, and an $r>0$ such that $|T(f_n)|>r$ for every $n\in I$; (ii)~there exists a sequence
$\{x_k : k\in\mathbb{N}\}$ in $X$, an infinite set $I\subseteq\N$, and an $r>0$ such that $\liminf\limits_{k\to\infty}|f_n(x_k)|>r$ for every $n\in I$. \end{Cor}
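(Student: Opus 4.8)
The plan is to read Corollary~\ref{thm3} as the contrapositive of Theorem~\ref{thm1}, but to apply that theorem to \emph{two different} uniformly bounded sequences, namely $\{f_n\}$ and $\{|f_n|\}$, in the two directions, because the absolute values sit in different places in the two conditions. Throughout I will use two elementary facts. First, for a positive linear functional $T$ one has $|T(g)|\le T(|g|)$, obtained by applying $T$ to $-|g|\le g\le |g|$. Second, for any bounded real sequence $(a_k)$ one has $\liminf_k|a_k|\le|\liminf_k a_k|$, since a subsequence realising the inner liminf $L$ sends $|a_k|\to|L|$, so $|L|$ is a subsequential limit of $(|a_k|)$. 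I also record the routine repackaging that a uniformly bounded $\{g_n\}$ fails to converge weakly to $0$ exactly when there are a positive linear functional $T$, an infinite $I\subseteq\N$, and an $r>0$ with $|T(g_n)|>r$ for all $n\in I$; this is just the negation of the definition of weak convergence, with the ``infinitely many $n$'' collected into $I$.

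For (i)$\Rightarrow$(ii): given $T$, $I$, $r$ as in (i), the inequality $T(|f_n|)\ge|T(f_n)|>r$ for $n\in I$ shows that $\{|f_n|\}$, which is again uniformly bounded, does not converge weakly to $0$. Hence condition (ii) of Theorem~\ref{thm1}, applied to $\{|f_n|\}$, must fail, so there is a sequence $\{x_k\}$ for which the numbers $\liminf_k|f_n(x_k)|$ do not tend to $0$ as $n\to\infty$. Since each such number is $\ge 0$, this forces an infinite $I'$ and an $r'>0$ with $\liminf_k|f_n(x_k)|>r'$ for $n\in I'$, which is (ii).

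For (ii)$\Rightarrow$(i): given $\{x_k\}$, $I$, $r$ as in (ii), the inequality $|\liminf_k f_n(x_k)|\ge\liminf_k|f_n(x_k)|>r$ for $n\in I$ shows that $\liminf_k f_n(x_k)$ does not tend to $0$, so condition (ii) of Theorem~\ref{thm1}, this time for $\{f_n\}$ itself, fails. By the theorem $\{f_n\}$ does not converge weakly to $0$, which by the repackaging above is precisely (i).

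The main obstacle, and the reason this is not a one-line contrapositive, is exactly the placement of the absolute value: $|\liminf_k f_n(x_k)|$ being large does \emph{not} imply that $\liminf_k|f_n(x_k)|$ is large (consider values alternating between $-2$ and $0$, where the former is $2$ and the latter is $0$). Consequently one cannot run both directions through Theorem~\ref{thm1} applied to the single sequence $\{f_n\}$. Routing (i)$\Rightarrow$(ii) through $\{|f_n|\}$ and (ii)$\Rightarrow$(i) through $\{f_n\}$, glued by the two inequalities above, is what sidesteps the difficulty.
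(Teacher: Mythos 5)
Your proof is correct, and it supplies something the paper itself omits: the paper offers no argument for Corollary~\ref{thm3} beyond calling it ``the contrapositive'' of Theorem~\ref{thm1}, and, as you rightly observe, it is not literally that. Negating Theorem~\ref{thm1}(ii) yields $|\liminf_k f_n(x_k)|>r$, with the absolute value \emph{outside} the $\liminf$, whereas the corollary wants it inside, and your alternating example shows the two are genuinely inequivalent. Your two inequalities, $|T(g)|\le T(|g|)$ for positive $T$ and $\liminf_k|a_k|\le|\liminf_k a_k|$ for bounded sequences, both point in the useful direction, and routing (i)$\Rightarrow$(ii) through the auxiliary uniformly bounded sequence $\{|f_n|\}$ while routing (ii)$\Rightarrow$(i) through $\{f_n\}$ itself closes the gap cleanly; the repackaging of ``fails to converge weakly to $0$'' as the existence of $T$, an infinite $I$, and $r>0$ is routine. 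The contrasting route, which is what the paper actually carries out for the harder density version (Theorem~\ref{thm4}) and which would also prove Corollary~\ref{thm3} directly, does not pass through Theorem~\ref{thm1} at all: for (ii)$\Rightarrow$(i) one defines $T(g)={}^\circ\big({}^*g(x_k)\big)$ for an infinite index $k$, and for (i)$\Rightarrow$(ii) one introduces the finitely additive measure $\mu(E)=T(\chi_E)$, bounds $\mu(A_n)$ from below for $A_n=\{x:|f_n(x)|>\delta\}$, and picks points in finite intersections. Your argument is more elementary in that it treats Banach's theorem as a black box, at the cost of not generalizing to the density statement, where no such black box is available and the direct construction is needed.
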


It is natural to consider the question of whether set $I$ can be required to have more structure than just being infinite; such a requirement would give a variant of weak convergence.  In this paper we adapt the proof from Ross~\cite{rossbanach} to prove a version of the Banach theorem in which $I$ is required to have positive upper density.

For $I\subseteq\N$ let $\bar{d}(I)=\limsup_n\|I\cap\{1,2,\dots,n\}\|/n$ (the \emph{upper asymptotic density} of $I$).  The main result of this paper is the following.

\begin{Thm}\label{thm4}Let $X$ be a set, $B(X)$ be all bounded real functions on $X$, and $\{\,f_n : n\in\mathbb{N}\}$ be a uniformly
bounded sequence.  The following are equivalent:  (i)~there is a positive linear functional $T$ on $B(X)$, an infinite set $I\subseteq\N$ with $\bar{d}(I)>0$, and an $r>0$ such that $|T(f_n)|>r$ for every $n\in I$; (ii)~there exists a sequence
$\{x_k : k\in\mathbb{N}\}$ in $X$, a set $I\subseteq\N$ with $\bar{d}(I)>0$, and an $r>0$ such that $\liminf\limits_{k\to\infty}|f_n(x_k)|>r$ for every $n\in I$. \end{Thm}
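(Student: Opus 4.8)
The plan is to prove the two implications separately, with (ii)$\Rightarrow$(i) being routine and (i)$\Rightarrow$(ii) carrying all the weight through a density-refined intersection lemma. For (ii)$\Rightarrow$(i) I would manufacture the functional directly from the sequence: fix a free ultrafilter $\mathcal U$ on $\N$ (equivalently, an infinite $H\in{}^{*}\N$) and set $T(f)=\lim_{\mathcal U}f(x_k)$, which is defined since each $f\in B(X)$ is bounded. Linearity and positivity are immediate, and because $|\cdot|$ is continuous the ultrafilter limit commutes with it, so for every $n\in I$ one has $|T(f_n)|=\lim_{\mathcal U}|f_n(x_k)|\ge\liminf_k|f_n(x_k)|>r$. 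Thus (i) holds with the very same $I$ and $r$, and no density is lost because $I$ is carried over unchanged.

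The substance is (i)$\Rightarrow$(ii). First I would represent $T$ measure-theoretically: put $\mu(A)=T(\mathbf 1_A)$ for $A\subseteq X$, a finitely additive measure with $\mu(X)=T(\mathbf 1)=:c>0$, so that $T(f)=\int f\,d\mu$ for bounded $f$. Splitting $I$ by the sign of $T(f_n)$ and using the subadditivity of $\bar d$, one of the two pieces still has positive upper density; replacing $f_n$ by $-f_n$ there if needed, I may assume $\int f_n\,d\mu>r$ for all $n\in I$. Fixing a threshold $s$ with $0<s<r/c$ and writing $M=\sup_n\|f_n\|_\infty$, the bound $r<\int f_n\,d\mu\le M\mu(A_n)+sc$ for $A_n:=\{x:f_n(x)>s\}$ forces $\mu(A_n)\ge(r-sc)/M=:\delta>0$ uniformly in $n\in I$. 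If I can produce $I'\subseteq I$ with $\bar d(I')>0$ for which $\{A_n\}_{n\in I'}$ has the finite intersection property (FIP), then the theorem follows: choosing $x_k\in\bigcap_{n\in I',\,n\le k}A_n$ (nonempty by the FIP), every $n\in I'$ satisfies $f_n(x_k)>s$ for all $k\ge n$, so $\liminf_k|f_n(x_k)|\ge s$, and (ii) holds with $r'=s/2$.

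Everything thus reduces to the density version of the intersection lemma: in a finite finitely additive measure space, if $\bar d(I)>0$ and $\inf_{n\in I}\mu(A_n)>0$, then some $I'\subseteq I$ with $\bar d(I')>0$ has $\{A_n\}_{n\in I'}$ with the FIP. This is where the S-measure construction enters. I would choose an infinite $N\in{}^{*}\N$ realizing the upper density, so that the hyperfinite counting measure $\nu$ on $\{1,\dots,N\}$ has $\nu_L(\widehat I)=\bar d(I)>0$ for $\widehat I={}^{*}I\cap[1,N]$, and form the Loeb (S-)measure $\mu_L$ of ${}^{*}\mu$ on ${}^{*}X$. Internal Fubini applied to $E=\{(n,x):n\in\widehat I,\ x\in{}^{*}A_n\}$ gives $(\nu\times{}^{*}\mu)(E)\ge\delta\,\nu(\widehat I)$, so Keisler's Fubini theorem for Loeb measures yields a point $\xi\in{}^{*}X$ with $\nu_L\big(\{n\in\widehat I:\xi\in{}^{*}A_n\}\big)>0$. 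Writing $J$ for this internal index set, $\xi$ is a common point of $\{{}^{*}A_n\}_{n\in J}$, so by transfer every finite standard $F\subseteq J$ has $\bigcap_{n\in F}A_n\neq\emptyset$; the FIP is therefore automatic for any standard $I'\subseteq J$.

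The hard part will be the descent: extracting from the internal set $J$ a genuinely standard $I'\subseteq J$ that still has $\bar d(I')>0$. The difficulty is real and not merely technical, since an internal $J$ of positive relative size in $[1,N]$ can consist entirely of infinite indices (think of $J=[N/2,N]$), in which case $J$ meets no standard integer and no standard $I'$ survives; the standard single-point analogue also genuinely fails, as the example $X=\N$, $\mu$ a density measure, $A_n=[n,\infty)$ shows. Consequently the single-scale picture above cannot be pushed through verbatim, and the choice of $\xi$ must be exploited more carefully. I expect the resolution to combine the fact that a set of positive $\mu_L$-measure of points $\xi$ are admissible with an overspill/saturation argument selecting one whose index set contains a standard set of positive upper density, and it is precisely this selection that the extended intersection lemma, proved through the S-measure construction, is designed to deliver. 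The remaining bookkeeping (measurability of the sections, the passage $T(f)=\int f\,d\mu$, and the constants $s,\delta,r'$) is routine.
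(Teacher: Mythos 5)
Your (ii $\Rightarrow$ i) direction is correct and coincides with the paper's (an ultrafilter limit along $\{x_k\}$ is the same functional as the standard part of ${}^*g(x_k)$ at an infinite index $k$), and your reduction of (i $\Rightarrow$ ii) to a density version of Bergelson's intersection lemma for finitely additive measures is exactly the paper's strategy: set $\mu(E)=T(\chi_E)$, show $\mu(A_n)$ is bounded below for $n\in I$ where $A_n=\{|f_n|>\delta\}$, extract $J\subseteq I$ of positive upper density whose finite subfamilies have nonempty (indeed positive-measure) intersection, and pick $x_N$ in the $N$-th intersection. (Your sign-splitting of $I$ is harmless but unnecessary, since one can work with $|f_n|$ throughout.)

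The gap is in the proof of the intersection lemma itself, which is the entire content of the paper, and it sits exactly where you flagged it. Your Fubini argument averages over a single hyperfinite block $[1,N]$ with $N$ infinite, so the index variable itself becomes nonstandard and the output is an internal set $J\subseteq[1,N]$ of positive relative size; as you note, such a $J$ (e.g.\ $J=[N/2,N]$) may contain no standard integers, and no standard set of positive upper density can be recovered from it. The fix is not an overspill selection from that internal data. The paper instead keeps the index scales standard: it forms the standard-indexed averages $F_n(x)=\frac{1}{|I_0\wedge\eta_n|}\sum_{k\in I_0\wedge\eta_n}\chi_{A_k}(x)$ along standard $\eta_n$ witnessing $\bar d(I_0)>b$, and runs a dichotomy. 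If some point $x$ has $\limsup_nF_n(x)\ge a$, then $I=\{n\in I_0:x\in A_n\}$ is a genuinely standard index set (because $n$ ranges over $\N$), $x$ is a common point giving the finite intersection property, and $|I\wedge\eta_n|/\eta_n\ge F_n(x)\,b$ gives $\bar d(I)\ge ab$. If no such point exists, one dominates $\limsup_nF_n$ by a simple function $\phi$ with $\int\phi\,d\mu<a$, observes that the set where the $F_k$ are eventually below ${}^*\phi$ is S-measurable and contains all standard points, invokes the Henson--Wattenburg lemma to find ${}^*B\subseteq E_0$ with $\mu(B)$ near $1$, bounds the (standard, finite) stopping index uniformly on ${}^*B$ by internality, and derives $a\le\int F_N\,d\mu<a$. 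This dichotomy is precisely the step your outline defers to ``the extended intersection lemma, proved through the S-measure construction'' --- i.e.\ to the result being proved --- so as written the argument is circular at its crux. One further structural point: the paper handles finite additivity not inside the Fubini step but up front, by replacing $(X,\SA,\mu)$ with its Loeb space and proving the countably additive statement there; the common point then lives in ${}^*X$, which is harmless because the index set remains a standard subset of $\N$.
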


With appropriate choice of notation, this result can be made to look more like the original Banach result.  Write $d\lim_na_n=L$ provided it is \emph{not} the case that there is an $r>0$ and set $I\subseteq\N$ with $\bar{d}(I)>0$ such that $|f(n)-L|>r$ for all $n\in I$.  (For equivalent ways of writing such limits see Furstenberg \cite[Chapter 9]{Furstenberg}.)

Say that a sequence $f_n$ of functions in $X$ \emph{weakly d--converges to zero} provided that for any positive linear functional $T$ on $B(X)$, $d\lim_{n\to\infty}Tf_n = 0$.

Then result Theorem~\ref{thm4} becomes:

\begin{Cor}\label{thm2}Let $X$ be a set, $B(X)$ be all bounded real functions on $X$, and $\{\,f_n : n\in\mathbb{N}\}$ be a uniformly
bounded sequence.  The following are equivalent:  (i)~$\{\,f_n\}_n$ weakly d--converges to $0$; (ii)~for any sequence
$\{x_k : k\in\mathbb{N}\}$ in $X$, $d\!\lim\limits_{n\to\infty}\liminf\limits_{k\to\infty}f_n(x_k)=0$.
\end{Cor}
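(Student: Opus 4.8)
The plan is to show that this corollary is Theorem~\ref{thm4} rephrased through the definitions of $d\lim$ and weak $d$--convergence, so the work is to translate the two conditions into the language of Theorem~\ref{thm4} and then reconcile a mismatch between an inner and an outer absolute value. First I would unwind condition~(i): by definition $\{f_n\}_n$ fails to weakly $d$--converge to $0$ exactly when some positive linear functional $T$ has $d\lim_n Tf_n\neq 0$, which by the definition of $d\lim$ means there are an $r>0$ and a set $I\subseteq\N$ with $\bar d(I)>0$ such that $|Tf_n|>r$ for all $n\in I$; this is verbatim condition~(i) of Theorem~\ref{thm4}. Likewise, condition~(ii) fails exactly when there is a sequence $\{x_k\}$, an $r>0$, and a set $I$ with $\bar d(I)>0$ such that $|\liminf_k f_n(x_k)|>r$ for every $n\in I$. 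Taking contrapositives, the corollary is therefore equivalent to the assertion that this last point condition is equivalent to condition~(i) of Theorem~\ref{thm4}.

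Given Theorem~\ref{thm4}, it then suffices to compare this point condition with condition~(ii) of Theorem~\ref{thm4}, which asks for $\liminf_k|f_n(x_k)|>r$ on a positive--density set. One direction is immediate from the elementary inequality $\liminf_k|f_n(x_k)|\le|\liminf_k f_n(x_k)|$: any witness to Theorem~\ref{thm4}(ii) is at once a witness to the negated condition~(ii), with the same data. For the reverse I would start from $\{x_k\}$ and $I$ with $\bar d(I)>0$ and $|\liminf_k f_n(x_k)|>r$ on $I$, and split $I$ into $I_+=\{n\in I:\liminf_k f_n(x_k)>r\}$ and $I_-=\{n\in I:\liminf_k f_n(x_k)<-r\}$. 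Since upper asymptotic density is subadditive, $\bar d(I)\le\bar d(I_+)+\bar d(I_-)$, so at least one piece has positive upper density. On $I_+$ there is nothing to do: there $f_n(x_k)>r$ for all large $k$, whence $\liminf_k|f_n(x_k)|=\liminf_k f_n(x_k)>r$, and Theorem~\ref{thm4} applies to $\{x_k\}$ and $I_+$ to deliver condition~(i) of Theorem~\ref{thm4}.

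The case $\bar d(I_-)>0$ carries the real content, and it is the step I expect to be the main obstacle. Here $\liminf_k f_n(x_k)<-r$ only forces $f_n(x_k)<-r$ for \emph{infinitely many} $k$, not for all large $k$, so one cannot pass to a single subsequence of $\{x_k\}$ making $|f_n|$ eventually large simultaneously for all $n\in I_-$, since the indices witnessing largeness for different $n$ may be disjoint; converting this ``infinitely often'' to an ``eventually'' uniformly in $n$ is precisely what fails elementarily. Instead I would build the functional of Theorem~\ref{thm4}(i) directly, by the same S--measure construction that drives the proof of Theorem~\ref{thm4}. Passing to the nonstandard extension, $\liminf_k f_n(x_k)<-r$ guarantees that for each $n\in I_-$ the internal index set $\{K:\ {}^{*}f_n(x_K)<-r\}$ is infinite, and the intersection/S--measure estimate behind Theorem~\ref{thm4} then yields a single infinite index $K$ for which $\st{({}^{*}f_n(x_K))}\le -r$ on a positive--density set of $n\in I_-$. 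Setting $T(g)=\st{({}^{*}g(x_K))}$ gives a positive linear functional with $|Tf_n|\ge r$ on that set, which is condition~(i) of Theorem~\ref{thm4} (with threshold $r/2$), completing the equivalence.
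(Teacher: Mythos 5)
You are right that the paper presents Corollary~\ref{thm2} as a pure notational restatement of Theorem~\ref{thm4} (its entire ``proof'' is the phrase ``Then result Theorem~\ref{thm4} becomes''), and you are also right that it is not quite one: unwinding the definitions turns the negation of (ii) into ``$|\liminf_k f_n(x_k)|>r$ on a set of positive upper density,'' whereas Theorem~\ref{thm4}(ii) has $\liminf_k|f_n(x_k)|>r$. Your first paragraph, the direction using $\liminf_k|f_n(x_k)|\le|\liminf_k f_n(x_k)|$, the density subadditivity, and the treatment of $I_+$ are all correct, and your localization of the difficulty to $I_-$ is exactly right.

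The $I_-$ step, however, is a genuine gap that cannot be repaired. For $n\in I_-$ the sets $K_n=\{k: f_n(x_k)<-r\}$ are merely infinite; to run the intersection/S--measure machinery of Corollary~\ref{bergelson} you would need a finitely additive probability measure on the $k$--index set giving all the $K_n$ mass bounded below, and nothing supplies one. In fact the implication you want is false. Take $X=\N$, $x_k=k$, and $f_n=-\chi_{S_n}$ where $\{S_n\}_n$ is a partition of $\N$ into infinitely many infinite sets. Then $\liminf_k f_n(x_k)=-1$ for every $n$, so the negated condition (ii) holds with $I=\N$ and $r=1/2$; but for any positive linear functional $T$ disjointness gives $\sum_{n\le N}T(\chi_{S_n})\le T(1)$, hence $Tf_n\to 0$ and condition (i) of Theorem~\ref{thm4} fails. (Concretely, the ${}^*S_n$ are pairwise disjoint, so no single infinite $K$ can make ${}^*f_n(x_K)\le -r$ for more than one value of $n$, which is precisely where your proposed construction breaks.) The upshot is that Corollary~\ref{thm2} as literally written --- with $d\lim$ applied to the signed quantity $\liminf_k f_n(x_k)$ --- is false, and the same example already contradicts Theorem~\ref{thm1} as stated; the intended reading must place the absolute value inside the $\liminf$, as in Corollary~\ref{thm3} and Theorem~\ref{thm4}(ii). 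Under that reading your first paragraph alone is the entire proof and agrees with the paper's implicit argument.
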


We note that there are many classical results for which replacing ``limit" by ``d--limit" yields an immediate open question.

Our proof uses nonstandard analysis, notably Abraham Robinson's notion of \emph{S--measurability}, and relies on a generalization (Corollary~\ref{bergelson}) to finitely-additive measures of a lemma from Bergelson~\cite{bergelson}.  The main idea is to substitute Corollary~\ref{bergelson} for the weaker \cite[Theorem 1]{rossbanach} in the proof of Theorem~\ref{thm1} in
Ross \cite{rossbanach}.%\footnote{More precisely, we obtain Corollary~\ref{bergelson} from Corollary~\ref{bergelson0}, which is Bergelson's original result (and for which we provide a new, nonstandard proof).  Theorem~\ref{thm4} then follows from that corollary.}
Nonstandard analysis has proved itself increasingly useful for the study of densities; see, for example, Jin \cite{Jin}.

\section{Loeb measures and S--measures}

The reader is assumed to be familiar with nonstandard analysis in general, and the Loeb measure construction in particular,
for example as in Ross \cite{Ross96}.  Assume that we work in a nonstandard model in the sense of Robinson, and that this model is as saturated
as it needs to be to carry out all constructions; in particular, it is an enlargement.

If $(X, \SA, \mu)$ is a finite measure space then both ${^*\SA}$ and $\SA_0=\{{\star{\!A}}:A\in\SA\}$ are algebras
on $\star{\!X}$.  Let $\SAS$ be the smallest $\sigma$--algebra\ containing ${\SA_0}$ and $\SAL$ be
the smallest $\sigma$--algebra\ containing ${^*\SA}$.  $(\star{\!X}, {^*\SA}, {^\circ{^*\mu}})$ is an external, standard, finitely-additive finite measure space, with ${^\circ{^*\mu}}({\star{\!X}})=\mu(X)<\infty$.  By either an appeal to the Carath\'eodory Extension Theorem or an elementary direct construction, $^{\circ*}{\mu}$ can be extended to a countably-additive measure (the Loeb measure) $\mu_L$ on $(\star{\!X}, \SAL)$, and by restriction on $(\star{\!X}, \SAS)$.

The algebra $\SAS$ of \emph{S--measurable} sets was introduced by Robinson \cite{Robinson}, then studied later by Henson and Wattenburg \cite{HW} (who used S--measurability to understand Egoroff's Theorem), and more recently by the author \cite{Ross96,R1,R2,Ross16}.

The main result we need is the following:

\begin{Lemma}(Henson and Wattenburg, 1981) \label{smeas}$\forall A\in\SA_S,\ $
\begin{align*}
\mu_L(A)&=\inf\{\mu(B)\ :\ A\subseteq{^*B}, B\in\SA\}\\
&=\sup\{\mu(B)\ :\ {^*B}\subseteq A,\ B\in{\SA}\}\\
&=\mu(A\cap X).
\end{align*}

\end{Lemma}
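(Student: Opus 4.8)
The plan is to prove the three equalities in Lemma~\ref{smeas} by first establishing them for the generating algebra $\SA_0$ and then extending by a monotone-class / approximation argument to all of $\SA_S$. The key observation is that on $\SA_0$ all four quantities coincide trivially: for $A=\star{\!B_0}$ with $B_0\in\SA$, the Loeb measure is $\mu_L(\star{\!B_0})=\st{(\star\mu(\star{\!B_0}))}=\st{(\mu(B_0))}=\mu(B_0)$ by transfer, the infimum is attained by $B_0$ itself (since $\star{\!B_0}\subseteq\star{\!B_0}$), likewise the supremum, and finally $\star{\!B_0}\cap X=B_0$ because $x\in X$ sits in $\star{\!B_0}$ exactly when $x\in B_0$. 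So the lemma is immediate on the generators, and the work is entirely in the extension.

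First I would fix notation for the three set functions on the right-hand side, say $\mu^*(A)=\inf\{\mu(B):A\subseteq\star{\!B}\}$, $\mu_*(A)=\sup\{\mu(B):\star{\!B}\subseteq A\}$, and $\nu(A)=\mu(A\cap X)$, and observe the trivial inequalities $\mu_*(A)\le\mu^*(A)$ and, by complementation, $\mu^*(A)=\mu(X)-\mu_*(\compl{A})$, which lets me handle the inner and outer approximations together. The strategy is then to show $\mu_L(A)\le\mu^*(A)$ and $\mu_*(A)\le\mu_L(A)$ for every $A\in\SA_S$, whence all of $\mu_*$, $\mu_L$, $\mu^*$ collapse; the identification with $\nu$ I would then extract by a separate argument using that every standard point of $\star{\!X}$ lies in $X$. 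The outer inequality $\mu_L(A)\le\mu^*(A)$ is direct from monotonicity of $\mu_L$ together with the computation on generators: any $B\in\SA$ with $A\subseteq\star{\!B}$ gives $\mu_L(A)\le\mu_L(\star{\!B})=\mu(B)$, so take the infimum. The reverse, $\mu^*(A)\le\mu_L(A)$, is where saturation enters: I would use the regularity of the Loeb measure (approximate $A$ from outside by a countable intersection of $\SA_0$-sets of measure close to $\mu_L(A)$) and then invoke countable saturation to replace that countable intersection by a single $\star{\!B}$ with $\mu(B)$ controlled.

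The main obstacle I anticipate is precisely this saturation step producing a single internal cover of the right measure: given that $A\subseteq\bigcap_n\star{\!B_n}$ with $\mu(B_n)\downarrow\mu_L(A)$, I need a single $B\in\SA$ (not merely an internal set) with $A\subseteq\star{\!B}$ and $\mu(B)$ arbitrarily close to $\mu_L(A)$, and the passage from an internal witness back to the standard algebra $\SA$ requires care. I expect to lean on the defining property of $\SA_S$ as the $\sigma$-algebra generated by $\SA_0$ — so that membership in $\SA_S$ itself supplies good two-sided approximations by $\SA_0$-sets — rather than on an abstract external saturation argument; concretely, I would verify that the collection of sets satisfying all three equalities forms a monotone class (or $\sigma$-algebra) containing $\SA_0$, using countable additivity of $\mu_L$ and the monotone convergence of the approximating $\mu(B_n)$ to pass the identities through countable unions and intersections. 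The final clause $\mu_L(A)=\mu(A\cap X)$ should then follow because intersecting with $X$ commutes with the Boolean operations and agrees with $\mu_L$ on generators, so it is preserved by the same monotone-class argument.
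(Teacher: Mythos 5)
The paper does not actually prove Lemma~\ref{smeas}: it is quoted from Henson and Wattenburg with a citation, so there is no in-paper argument to measure yours against. Judged on its own terms, your outline is essentially a correct reconstruction, and the skeleton --- check that all four quantities agree on the generating algebra $\SA_0$, then propagate the identities to $\SAS=\sigma(\SA_0)$ by a Dynkin/monotone-class argument combined with uniqueness of extension of a finite premeasure --- does go through. The computations you give on generators, and the duality $\mu^*(A)=\mu(X)-\mu_*(\compl{A})$, are all correct.

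Two points need tightening. First, the step you flag as the main obstacle is not where the difficulty lies, and as literally described it would not run: Carath\'eodory regularity hands you a countable \emph{cover} $A\subseteq\bigcup_n{^*B_n}$ with $\sum_n\mu(B_n)$ close to $\mu_L(A)$, not a countable intersection, and saturation can only produce an \emph{internal} witness, never a standard $B\in\SA$. The single standard cover comes instead from forming $B=\bigcup_nB_n$ inside $\SA$ and using continuity from below of $\mu$; concretely, in your $\lambda$-system step for an increasing union $A=\bigcup_kA_k$ one picks $A_k\subseteq{^*B_k}$ with $\mu(B_k)<\mu_L(A_k)+\epsilon 2^{-k}$ and checks $\mu\bigl(\bigcup_kB_k\bigr)\le\mu_L(A)+\epsilon$. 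This is precisely where the hypothesis that $(X,\SA,\mu)$ is a genuine countably additive finite measure space is consumed --- worth making explicit, since the paper elsewhere (Corollary~\ref{bergelson}) runs the same construction over a merely finitely additive space, where this step is unavailable. Second, the equality $\mu_L(A)=\mu(A\cap X)$ tacitly asserts $A\cap X\in\SA$; you should first verify that $\{A\in\SAS : A\cap X\in\SA\}$ is a $\sigma$-algebra containing $\SA_0$ (it is, since $A\mapsto A\cap X$ commutes with complementation and countable unions and sends ${^*B}$ to $B$). Once that is done, $A\mapsto\mu(A\cap X)$ is a countably additive measure on $\SAS$ agreeing with $\mu_L$ on the $\pi$-system $\SA_0$, and the finite-measure uniqueness theorem gives the third equality outright, which is cleaner than rerunning the monotone-class argument by hand.
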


In particular, if $A\in\SAS$ and $A$ contains all standard points of $X$, then $\mu_L(A)=\mu(X)$ and $A$ contains sets of the form ${^*B}$ for $B\in\SA$ of arbitrary large measure.

\section{A fundamental lemma}

This section gives a new proof of a modest generalization (Corollary~\ref{bergelson}) of a lemma of Bergelson~\cite{bergelson} .  Bergelson's result is usually proved using Fatou's Lemma or the Lebesgue Dominated Convergence Theorem; the proof here replaces these with an appeal to Lemma~\ref{smeas}.  While this proof is not shorter than the standard ones, it is more explicit, which could prove useful in extending results which use it, such as Furstenburg's Multiple Ergodic Theorem \cite{Furstenberg}.

We begin with a weak form of the lemma.

\begin{Lemma}\label{weakbergelson}Let $(X, \SA, \mu)$ be a probability measure, $a>0$, $I_0\subseteq\N$ with $\bar{d}(I_0)>b>0$, and $A_n\in\SA$ with $\mu(A_n)\ge a$ for all $n\in I_0$. For some $I\subseteq I_0$ with $\bar{d}(I)\ge ab, \{A_n\}_{n\in I}$ has the finite intersection property.\end{Lemma}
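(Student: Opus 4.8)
The plan is to run the standard averaging argument for Bergelson-type intersection lemmas, but to replace the appeal to a convergence theorem with Lemma~\ref{smeas}. First I would reduce the finite intersection property to the existence of a common point: $\{A_n\}_{n\in I}$ has the finite intersection property as soon as some point lies in every $A_n$ with $n\in I$, so it suffices to produce a single standard $x_0\in X$ together with a set $I\subseteq I_0$, $\bar{d}(I)\ge ab$, with $x_0\in A_n$ for all $n\in I$. Writing $S_n(x)=\tfrac1n\sum_{m\in I_0\cap\{1,\dots,n\}}\mathbf{1}_{A_m}(x)$, the hypotheses $\mu(A_m)\ge a$ and $\bar{d}(I_0)>b$ give, by double counting, $\int_X S_n\,d\mu\ge a\,|I_0\cap\{1,\dots,n\}|/n$ and hence $\limsup_n\int_X S_n\,d\mu\ge a\,\bar{d}(I_0)>ab$. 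If I can locate a standard $x_0\in X$ with $\limsup_n S_n(x_0)\ge ab$, then $I:=\{n\in I_0:x_0\in A_n\}$ has $\bar{d}(I)=\limsup_n S_n(x_0)\ge ab$ and contains $x_0$ in every member, which finishes the proof.

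To produce $x_0$ I pass to the nonstandard model. I would choose an infinite $N$ at which the limsup above is nearly realized, so that the internal average satisfies $\mathrm{st}\big(\int_{\star{X}}S_N\,d\star{\mu}\big)\ge a\,\bar{d}(I_0)>ab$, while also arranging $|\star{I_0}\cap\{1,\dots,N\}|/N$ to sit noninfinitesimally above $b$. The useful bookkeeping fact here is single-scale: for a standard $J\subseteq\N$ one has $\bar{d}(J)\ge\mathrm{st}\!\big(|\star{J}\cap\{1,\dots,N\}|/N\big)$, so if some standard $x_0$ satisfies $S_N(x_0)\ge ab$ then already $J=\{n\in I_0:x_0\in A_n\}$ has $\bar{d}(J)\ge ab$. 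Thus everything reduces to finding a genuine standard point in the internal set $\{S_N\ge ab\}$, which has noninfinitesimal $\star{\mu}$-measure by the averaging bound.

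The main obstacle is precisely this last step: an internal set of positive Loeb measure need not meet the standard points of $X$, so one cannot simply invoke thickness, and this is where Lemma~\ref{smeas} must enter in place of Fatou's lemma or dominated convergence. I would therefore phrase the selection through the standard function $g(x)=\bar{d}(\{n\in I_0:x\in A_n\})=\limsup_n S_n(x)$: its nonstandard extension has standard part $\mathrm{st}(\star{g})$ whose superlevel sets $\{\mathrm{st}(\star{g})\ge c\}$ lie in $\SAS$, being countable intersections of sets $\star{B}$ with $B=\{g\ge c-1/k\}\in\SA$, and they meet the standard points in exactly $\{x\in X:g(x)\ge c\}$. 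Lemma~\ref{smeas} then yields $\mu(\{g\ge c\})=\mu_L(\{\mathrm{st}(\star{g})\ge c\})$, transporting the Loeb-measure abundance back to the standard space. The heart of the argument, and the exact substitute for the omitted convergence theorem, is the reverse-Fatou inequality $\int_X g\,d\mu\ge\limsup_n\int_X S_n\,d\mu\ (>ab)$; I expect to secure it by the Henson--Wattenberg S-measurability technique (the mechanism behind their treatment of Egoroff's theorem), comparing $\mathrm{st}(\star{g})$ with the internal averages via the monotone truncations $\sup_{n\ge m}S_n$. Once $\int_X g\,d\mu>ab$ is in hand, $\{g\ge ab\}$ has positive $\mu$-measure, any standard $x_0$ there gives the desired $I$, and the lemma follows.
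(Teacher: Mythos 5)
Your proposal is correct in outline and completable, but it organizes the argument genuinely differently from the paper. Both proofs reduce the finite intersection property to producing one standard point lying in $A_n$ for a set of indices of upper density at least $ab$, and both use Lemma~\ref{smeas} as the substitute for Fatou/dominated convergence. The paper, however, normalizes its averages by $|I_0\wedge\eta_n|$ along a subsequence $\eta_n$ witnessing $\bar{d}(I_0)>b$, and runs a dichotomy: either some $x$ has $\limsup_nF_n(x)\ge a$ (and then a product computation gives density $\ge ab$), or $\limsup_nF_n<a$ everywhere, in which case it derives a contradiction by an Egoroff-style uniformization --- the S-measurable set $E_0$ yields ${^*B}\subseteq E_0$ with $\mu(B)$ near $1$ on which the internal least index $n(x)$ is bounded by a standard $N$, and integrating $F_N$ against the simple function $\phi$ contradicts $\int F_N\,d\mu\ge a$. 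You instead normalize by $n$, aim at the threshold $ab$ directly, and argue positively: show $\mu(\{g\ge ab\})>0$ for $g=\limsup_nS_n$ and pick a standard point there. Your route is more direct (no case split, no contradiction), and what it really establishes is a nonstandard reverse-Fatou inequality for these indicator averages; the paper's dichotomy buys the luxury of never proving any convergence-type inequality, only the single integration estimate it needs.

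One caution about the step you left as a sketch: it does complete, but only if the comparison is routed through the truncations $G_m=\sup_{n\ge m}S_n$ at \emph{standard} $m$, as you hint, and not through ${^*g}$ pointwise. For infinite $N$ one has $\{S_N>c\}\subseteq\bigcap_{m\in\N}{^*\{G_m>c\}}$, a set in $\SAS$ whose trace on $X$ is squeezed between $\{g>c\}$ and $\{g\ge c\}$; choosing $N$ by overspill so that $\int S_N\,d{^*\mu}>ab+2\delta$, a Chebyshev estimate gives this set Loeb measure at least $\delta$, and Lemma~\ref{smeas} then gives $\mu(\{g\ge ab+\delta\})\ge\delta>0$, which is all you need --- the full inequality $\int_X g\,d\mu\ge\limsup_n\int_X S_n\,d\mu$ is more than necessary. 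What would fail is any attempt to compare $S_N$ with ${^*g}$ at the level of points: for nonstandard $x$, ${^*g}(x)$ is the \emph{internal} limsup, an infimum of ${^*G_m}(x)$ over all hyperfinite $m$, and can be far below $S_N(x)$ (the internal sequence may spike at $N$ and then decay); so $\{S_N\ge ab\}$ need not be contained in the superlevel set of the standard part of ${^*g}$ on which your write-up leans. Since you name the truncations as the mediating device, the right idea is present, but this containment is exactly where the proof must be careful.
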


Before proceeding with the proof, we note two immediate corollaries.  The first is Bergelson's original result, the second is the extension we need.

\begin{Cor}\label{bergelson0} Let $(X, \SA, \mu)$ be a probability measure, $a>0$, $I_0\subseteq\N$ with $\bar{d}(I_0)>b>0$ and $A_n\in\SA$ with $\mu(A_n)\ge a$ for all $n\in I_0$. For some $I\subseteq I_0$ with $\bar{d}(I)\ge ab$ and every finite $J\subseteq I$, $\mu(\hspace{.1em}\bigcap_{n\in J}A_n)>0$.
\end{Cor}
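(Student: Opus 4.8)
The plan is to obtain Corollary~\ref{bergelson0} from its weak form, Lemma~\ref{weakbergelson}, not by working in $(X,\SA,\mu)$ directly but by applying the weak lemma inside a representing space in which ``nonempty finite intersection'' coincides with ``positive-measure finite intersection.'' Some such change of space is genuinely needed, since the bare finite intersection property does not by itself force positive measure: if $B_n\in\SA$ are pairwise disjoint with $\mu(B_n)=a$ and $p\in X$, then $A_n=\{p\}\cup B_n$ has $\mu(A_n)\ge a$, yet every finite intersection of two or more of the $A_n$ equals $\{p\}$, which is nonempty but null.

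The space I would use is the Stone space $S$ of the measure algebra $\mathcal M=\SA/\mathcal N$, where $\mathcal N$ is the ideal of $\mu$-null sets. Under Stone duality $\mathcal M$ is isomorphic to the algebra of clopen subsets of the compact space $S$; write $\hat A$ for the clopen set corresponding to the class of $A$ in $\mathcal M$. The quotient measure $[A]\mapsto\mu(A)$ transports to a finitely additive probability on the clopen algebra, and since $S$ is compact this is automatically countably additive on clopens (a countable disjoint clopen cover of a clopen set has a finite subcover, so all but finitely many pieces are empty). By the Loomis--Sikorski construction it therefore extends to a genuine probability measure $\hat\mu$ on the $\sigma$--algebra generated by the clopen sets, with $\hat\mu(\hat A)=\mu(A)$ for every $A\in\SA$.

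The one fact that makes the deduction immediate is that $A\mapsto\hat A$ is a Boolean homomorphism onto the clopens, so $\bigcap_{n\in J}\hat A_n=\widehat{\bigcap_{n\in J}A_n}$ for finite $J$, and a clopen set is empty exactly when its algebra element is $0$, i.e.\ exactly when the corresponding set is null. Hence for the sets $\hat A_n$ the condition $\bigcap_{n\in J}\hat A_n\neq\emptyset$ is equivalent to $\mu(\bigcap_{n\in J}A_n)>0$. I would then apply Lemma~\ref{weakbergelson} in the probability space $(S,\sigma(\text{clopens}),\hat\mu)$ with the same $a$, $b$, and $I_0$ and with the sets $\hat A_n$ (which satisfy $\hat\mu(\hat A_n)=\mu(A_n)\ge a$), obtaining $I\subseteq I_0$ with $\bar d(I)\ge ab$ for which $\{\hat A_n\}_{n\in I}$ has the finite intersection property; translating each nonempty finite intersection back through the equivalence above yields $\mu(\bigcap_{n\in J}A_n)>0$ for every finite $J\subseteq I$, which is the conclusion. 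The only step requiring care is the construction and properties of $(S,\hat\mu)$ together with the clopen/null equivalence; this is classical measure-algebra theory, and it is precisely what upgrades the cheap finite intersection property to the positive-measure statement.
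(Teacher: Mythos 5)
Your proposal is correct, but it takes a genuinely different route from the paper. The paper stays inside the original space $(X,\SA,\mu)$ and uses a one-line trick: it sets $A_n'=A_n\setminus B$, where $B$ is the union of all finite intersections $\bigcap_{i\in J}A_i$ of measure zero; since $\N$ has only countably many finite subsets, $B$ is a countable union of nullsets and hence null, so $\mu(A_n')=\mu(A_n)\ge a$, and by construction $\bigcap_{n\in J}A_n'\neq\emptyset$ exactly when $\mu(\bigcap_{n\in J}A_n)>0$. Applying Lemma~\ref{weakbergelson} to the $A_n'$ then finishes. Your version replaces this with the Stone space of the measure algebra $\SA/\mathcal N$: there the homomorphism $A\mapsto\hat A$ makes emptiness of a clopen finite intersection literally equivalent to nullity of the original intersection, and the Loomis--Sikorski extension supplies the countably additive probability needed to invoke the weak lemma. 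Both reductions are valid, and your opening counterexample correctly identifies why some modification of the space is necessary. What the paper's argument buys is economy: it needs nothing beyond the observation that countably many nullsets union to a nullset, and it exploits the specific countability of the index family. What your argument buys is a reusable general principle --- in the measure-algebra realization, ``finite intersection property'' and ``all finite intersections have positive measure'' coincide for \emph{any} family of sets, with no counting argument --- at the cost of importing Stone duality and the Carath\'eodory/Loomis--Sikorski machinery. One small point worth making explicit if you write this up: Lemma~\ref{weakbergelson} is stated for a (countably additive) probability measure on a $\sigma$--algebra, so the extension of $\hat\mu$ from the clopen algebra to $\sigma(\text{clopens})$ is not optional decoration but a required step, and your compactness justification of the premeasure condition is the right one.
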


\begin{proof}Let $A_n'=A_n\setminus B$, where
\[B=\bigcup\left\{\bigcap_{i\in J}A_i : J\subseteq\mathbb{N}, J\textrm{ finite, } \mu\left(\hspace{.1em}\bigcap_{i\in J}A_i\right)=0 \right\}.\]
$B$ is a countable union of nullsets, so is itself a nullset.  $\mu(A_n')=\mu(A_n)\ge a$, and for any finite $J$,
$\mu(\hspace{.1em}\bigcap_{n\in J}A_n)>0$ if and only if $\bigcap_{n\in J}A_n'\neq\emptyset$.  Apply Lemma~\ref{weakbergelson} to the sequence $\{A_n'\}_n$ to get an index set $I$ with density at least $ab$ such that $\{A_n'\}_{n\in I}$ has the finite intersection property, then every finite intersection from $\{A_n\}_{n\in I}$ has positive measure.\end{proof}

\begin{Cor}\label{bergelson}Let $(X, \SA, \mu)$ be a \emph{finitely additive} probability measure, $a>0$, $I_0\subseteq\N$ with $\bar{d}(I_0)>b>0$, and $A_n\in\SA$ with $\mu(A_n)\ge a$ for all $n\in I_0$.  For some $I\subseteq I_0$ with $\bar{d}(I)\ge ab$ and every finite $J\subseteq I$, $\mu(\hspace{.1em}\bigcap_{n\in J}A_n)>0$.
\end{Cor}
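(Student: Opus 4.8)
The plan is to reduce the finitely-additive statement to the countably-additive one already established in Corollary~\ref{bergelson0}. The only obstruction to quoting Corollary~\ref{bergelson0} directly is that $\mu$ need not be countably additive; the idea is to replace $(X,\SA,\mu)$ by an associated \emph{countably}-additive probability space in which the sets $A_n$, together with all of their finite intersections, are faithfully represented with the same measures. The Loeb/S--measure machinery of Section~2 supplies exactly such a space.

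Concretely, first I would pass to the nonstandard extension and form the Loeb measure $\mu_L$ on the $\sigma$--algebra $\SAS$ over ${^*X}$. The key observation --- and the one point that must be checked with care --- is that this construction never uses countable additivity of $\mu$: the internal measure ${^*\mu}$ is only \emph{internally} finitely additive, and the countable additivity of $\mu_L$ is supplied entirely by saturation (an overspill argument shows that the standard part of ${^*\mu}$ is already a countably additive premeasure on the algebra ${^*\SA}$, which then extends by Carath\'eodory). Thus $({^*X},\SAS,\mu_L)$ is a genuine countably-additive probability space, with $\mu_L({^*X})=\mu(X)=1$, even though $\mu$ was merely finitely additive.

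Next I would record how the data transfer. Each ${^*A_n}\in\SA_0\subseteq\SAS$, and by Lemma~\ref{smeas}, $\mu_L({^*A_n})=\mu({^*A_n}\cap X)=\mu(A_n)\ge a$, since the standard points of ${^*A_n}$ are exactly $A_n$. Applying Corollary~\ref{bergelson0} to the countably-additive probability space $({^*X},\SAS,\mu_L)$, to the sequence $\{{^*A_n}\}_n$, and to the same $a$, $b$, and $I_0$ (the density hypotheses concern subsets of the standard set $\N$ and are untouched by the passage to ${^*X}$) yields an $I\subseteq I_0$ with $\bar d(I)\ge ab$ such that $\mu_L(\bigcap_{n\in J}{^*A_n})>0$ for every finite $J\subseteq I$.

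Finally I would pull this conclusion back to $\mu$. For finite $J$ the star operation commutes with the intersection, so $\bigcap_{n\in J}{^*A_n}={^*C}$ where $C=\bigcap_{n\in J}A_n\in\SA$; a second application of Lemma~\ref{smeas} gives $\mu_L({^*C})=\mu(C\cap X)=\mu(C)=\mu(\bigcap_{n\in J}A_n)$. Hence $\mu(\bigcap_{n\in J}A_n)=\mu_L(\bigcap_{n\in J}{^*A_n})>0$ for every finite $J\subseteq I$, which is the desired conclusion. The only genuinely delicate step is the second paragraph --- confirming that the Loeb measure remains countably additive when built from a finitely-additive $\mu$ --- and once that is in hand the rest is bookkeeping through Lemma~\ref{smeas} and the fact that $*$ preserves finite Boolean operations.
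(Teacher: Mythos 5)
Your proposal is correct and follows essentially the same route as the paper: construct the Loeb measure from the finitely-additive space, apply Corollary~\ref{bergelson0} to the sequence $\{{^*A_n}\}_n$ in that countably-additive space, and pull the conclusion back via the identity $\mu(\bigcap_{n\in J}A_n)=\mu_L(\bigcap_{n\in J}{^*A_n})$ from Lemma~\ref{smeas}. The paper states this in two sentences; your additional checks (that the Loeb construction needs only finite additivity of $\mu$, and that $*$ commutes with finite intersections) are exactly the details it leaves implicit.
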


\begin{proof}Let $({\star{\!X}},\SA_L,\mu_L)$ be the Loeb measure constructed from $(X, \SA, \mu)$.  Apply Corollary~\ref{bergelson0} to the sequence $\{{\star{\!A}_n}\}_{n\in\N}$ to get an index set $I\subseteq\N$ with $\bar{d}(I)\ge ab$ such that for every finite $J\subseteq I$, $\mu(\hspace{.1em}\bigcap_{n\in J}{\star{\!A}}_n)>0$.
The observation that $\mu(\hspace{.1em}\bigcap_{n\in J}A_n)=\mu_L(\hspace{.1em}\bigcap_{n\in J}{\star{\!A}}_n)$ for any finite $J\subseteq I$ completes the proof.\end{proof}

\subsection{Proof of Lemma~\ref{weakbergelson}}

Adopt the following notation.  If $J\subseteq\N$ and $N\in\N$  write $J\wedge N=J\cap\{1,\dots,N\}$ and $|J\wedge N|=$ the (finite) cardinality of $J\wedge N$.

Given $I_0$ with $\bar{d}(I_0)>b$ let $\eta_n$ be an increasing sequence of natural numbers with $|I_0\wedge \eta_n|/\eta_n>b$ for all $n$.  For $x\in X$ and $n\in\N$ define:
\[F_n=\frac{1}{|I_0\wedge \eta_n|}\sum_{k\in I_0\wedge \eta_n}\chi_{A_n}(x)\]

There are two cases:

\textbf{Case 1: }$\limsup_nF_n(x)\ge a$ for some $x$.  Put $I=\{n\in I_0:x\in A_n\}$, then $\{A_n\}_{n\in I}$ has the finite intersection property.  Observe:
\[\frac{|I\wedge \eta_n|}{\eta_n}=\left(\frac{|I\wedge \eta_n|}{|I_0\wedge \eta_n|}\right)\left(\frac{|I_0\wedge \eta_n|}{\eta_n}\right)>F_n(x) b\]
so if we let $n\to\infty$ along a subsequence $n_k$ witnessing $\limsup_nF_n(x)\ge a$, we get $\bar{d}(I)\ge ab$.

\textbf{Case 2: }$\limsup_nF_n(x)<a$ for all $x$.  Then for some $r<a$ and $C\in\SA$ with $\mu(C)>0$, $\limsup_nF_n(x)< r$ on $C$.  Let $\phi=r\chi_C+a\chi_{\compl{C}}$, and note that \[\limsup_nF_n(x)<\phi(x)\textrm{ for all }x\in X\label{limsup}.\]  Put:
\[E_0=\bigcup_{n\in\N} \big[\big( \mathop{\bigcap_{k\ge n}}_{k\in\N} {^*\{x\in X : F_k(x)<\phi(x)\}} \big)\cap{^*\big(\mathop{\bigcap_{k\ge n}}_{k\in\N} \{x\in X : F_k(x)<\phi(x)\}\big)}\big]\]
Observe that $E_0\in\SAS$.  If $x\in X$ is standard then by \ref{limsup}, $x\in E_0$.  It follows that $\mu_L(E_0)=1$, and we may take $B\in\SA$ with ${^*B}\subseteq E_0$ and $\mu(B)$ arbitrarily close to $1$.

For $x\in{^*B}$ let $n(x)$ be least so that $F_k(x)<{^*\phi}(x)$ for all $k\ge n(x), k\in{^*\N},$ and note that by definition of $E_0$ $n$ is an internal function taking finite values on ${^*B}$, so has a bound $N\in\N$ on ${^*B}$.  It follows:
\begin{align*}a&\le\frac{1}{|I_0\wedge \eta_N|}\sum_{k\in I_0\wedge \eta_N}\int\chi_{A_k}(x)=\int_XF_Nd\mu\\
&=\int_{B\cap C}F_Nd\mu+\int_{B\setminus C}F_Nd\mu+\int_{X\setminus B}F_Nd\mu\\
&\le r\mu(B\cap C) + a\mu(B\setminus C) + 1\mu(X\setminus B)\end{align*}

Letting $\mu(B)\to1$, $a\le r\mu(C)+a\mu(X\setminus C)=a-(a-r)\mu(C)<a$, a contradiction.  This completes the proof.

\section{Proof of Theorem~\ref{thm4}}
We are now ready to prove the main result.

(ii $\Rightarrow$ i) Let $\{x_k : k\in\mathbb{N}\}$, $I\subseteq\N$, and an $r>0$ as in (ii).

For all standard $n\in I$ and any {\em infinite} $k\in({^*\mathbb{N}}\setminus\mathbb{N}), |{^*f}_{n}(x_k)|>r$.  Fix such a $k$, and define $T\co B(X)\to\mathbb{R}$ by
$T(g)={\st{\,^*g}(x_k)}$.  It is easy to see that $T$ is a positive linear functional.  However, for standard $n\in\mathbb{N}$,
\[0<r<|{^*f}_{n}(x_k)|\approx |\st{\,^*f}_{n}(x_k)|=|T(f_{n})|\] so this $T$ and the same $I$ and $r$ from (ii) witness (i).

(i $\Rightarrow$ ii) Suppose (i) holds. Given the $T$, $I$, and $r>0$ given by (ii),
define a finite, finitely-additive measure on $(X, \mathcal{P}(X))$ by $\mu(E)=T(\chi_E)$.  Let $\bar{d}(I)>\alpha>0$.

Let $s<r$ and $\delta\in\mathbb{R}$ satisfy $0<\delta<{s}/{T(1)}$; equivalently, $0<T(\delta)<s$.  Note that for any $g\in B(X)$
with $-\delta\le g\le\delta$, positivity of $T$ ensures that
\[-T(\delta)=T(-\delta)\le T(g)\le T(\delta)\]
so $|T(g)|\le T(\delta)<s$.  Let $M>0$ be a bound for all the functions $f_n$.

For $n\in I$ put $A_{n}=\{x\in X : |f_{n}(x)|>\delta\}$.  Then
\[r<|T(f_{n})|=
|T(f_{n}\chi_{A_{n}})+T(f_{n}\chi_{\compl{A}_{n}})|\le |T(f_{n}\chi_{A_{n}})|+T(\delta)\le
MT(\chi_{A_{n}})+s\] so $\mu(A_{n})=T(\chi_{A_{n}})>\frac{r-s}{M}>0$ for all $n\in I$.  Note that by taking $s$ close to $0$ we can make this last term as close to $r/M$ as we like, in particular so that $\frac{r-s}{M}d(\bar{I})>\alpha\frac{r}{M}$.

By Corollary~\ref{bergelson}, there is a subset $J=\{n_m\}_m\subseteq I$ such that $\bar{d}(J)>\alpha^2{r/M}$
and such that for every $N\in\mathbb{N}$, $\mu\big(\bigcap\limits_{m=1}^NA_{n_m}\big)>0$.  Let $x_N\in\bigcap\limits_{m=1}^NA_{n_m}$.
For any $m, N\in\mathbb{N}$ with $N>m, x_N\in A_{n_m}$, therefore $|f_{n_m}(x_N)|>\delta$, so
for every $n\in J$, $\liminf\limits_{k\to\infty}|f_{n}(x_k)|\ge\delta$. The set $J$ and constant $\delta>0$ witness the implication (ii).  This completes the proof.

\bibliographystyle{plain}
\bibliography{dba4a}

\end{document}